\newtheorem{theorem}{Theorem}[section]
\newtheorem{lemma}[theorem]{Lemma}
\newtheorem{corollary}[theorem]{Corollary}
\newtheorem{example}[theorem]{Example}
{\theorembodyfont{\rmfamily}\newtheorem{definition}[theorem]{Definition}}
{\theorembodyfont{\rmfamily}\newtheorem{remark}[theorem]{Remark}}
\newproof{proof}{Proof}
\newcommand{\beq}{\begin{eqnarray*}}
\newcommand{\eeq}{\end{eqnarray*}}
\def\R{{\mathbb{R}}}
\def\N{{\mathbb{N}}}
\def\Z{{\mathbb{Z}}}
\begin{document}
\begin{frontmatter}
\title{Norm Hilbert spaces over $G$-modules with a convex base\footnote{Partially supported by DIUFRO DI150043}}
\author{E. Olivos - H. Ochsenius}
\ead{elena.olivos@ufrontera.cl, herminia.ochsenius.a@gmail.com,}
\address{Universidad de La Frontera. Temuco. Chile}

\begin{abstract} By analogy with the classical definition, a Norm Hilbert space $E$ is defined as a Banach space over a valued field $K$ in which each closed subspace has an orthocomplement. In the rank one case (that is, the value group as well as the set of norms of the space are contained in $[0, \infty)$), they were described by van Rooij in his classical book of 1978, but the name itself was introduced in 1999 by Ochsenius and Schikhof  for the case of spaces with an infinite rank valuation.

Here we shall also consider only value groups that are contained in $(\R^+,\cdot)$, yet we borrow from the infinite rank case the notion of a $G$-module for the set of norms of the space. Their  structure allows for greater complexity than that of ordered subsets of $\R$. In this paper we describe a new class of Norm Hilbert spaces,  those in which the $G$-module has a convex base. Their characteristics will be the focus of our study.

\end{abstract}

\begin{keyword}  $G$-modules\sep Norm Hilbert spaces
\end{keyword}

\end{frontmatter}

\maketitle

\section*{Introduction}

The relevant place of real and complex Hilbert spaces in so many areas of mathematics has made the question of its generalization to spaces over different scalar fields a most interesting one. The decision of which features of the classical definition should be maintained, and which will be allowed to change, has been central. A well developed theory has as its center normed spaces over valued fields, and crucial questions are related to areas of Functional Analysis.  A comprehensive study can be found in van Rooij's  classical book (see \cite{Rooij}). A different approach started with the work of Keller, Gross and K\"unzi. They were interested in spaces with a bilinear form that induced a norm, and in which the Projection Theorem was valid. They termed them orthomodular spaces (see \cite{G-Ku} and \cite{Keller}).\\
In both cases fields have a non-archimedean valuation and the norm is also non-archimedean. But in the first one the value group is a subset of $\R$ whereas in the second it is the union of the infinite  set of its convex subgroups. They are said to be valuations of rank one or of infinite rank respectively.\\
As the theory of generalized Hilbert spaces (that is, every closed  subspace has an orthocomplement) over arbitrary fields developed, it became clear  that in many interesting cases the set of vector norms was not related directly to the valued field or to the value group. Thus a far more general concept was introduced as a ``home'' for the set of norms. It was called a $G$-module, and the general setup was structured in \cite{morado} and further developed in \cite{base}.\\
A new question was posed by Schikhof and Olivos: How do the features of the $G$-module intertwine with those of the vector spaces? To be precise, suppose  that the value group $G$ is just a cyclic group, but place no conditions a priori  on the $G$-module except that it has a convex base as in \cite{base}. What can be said of NHS spaces?
This is the central theme of this paper. Finally we could like to recognize the debt that this paper has towards the research of W. Schikhof. He started the study of $G$-modules and the way their specific characteristics influenced the properties of non-archimedean vector spaces. The present work follows a line originated by him and the authors during his stays in Temuco, Chile.\\

We start with the definitions that shall be needed in what follows, the prominent one being that of a $G$-module with a convex base. Then we study the relevant theorems, first  for Banach spaces and then for NHS, in this new setup. We give full proofs, even in the cases when they are quite similar to classical ones. The paper ends with a new characterization of NHS that is both simple and useful.

\section{Basic concepts and notation}

\begin{definition} Let $\langle G,\, \cdot\rangle $ be an ordered group.
A linearly ordered set $(X, \leq)$ with no smallest element is called a {\bf $G$-module} if $G$ acts on $X$ by the map $(g,\, x)\mapsto gx$ and for any $g,\, g_1,\, g_2\in G,\ x,\, x_1,\, x_2\in X$ we have\\
(i) $g_1\leq g_2 \Rightarrow g_1x\leq g_2x$,\\
(ii) $x_1\leq x_2 \Rightarrow gx_1\leq gx_2$,\\
(iii) The orbit $Gx$ is coinitial in $X$
\end{definition}

\begin{remark}
It is easily proven that $Gx$ is also cofinal in $X$ and that $X$ has no largest element.\\
Usually we adjoin a smallest element to $X$, that will be denoted by 0.\end{remark}

\begin{definition} Let $X$ and $Y$ be two $G$-modules, a map $\varphi: X\to Y$ is called a $G$-module map if $\varphi$ is increasing and $\varphi(gx)=g\varphi(x)$ for all $g\in G,\  x\in X$.\end{definition}

We introduce now a central concept, the {\bf convex base} of a $G$-module $X$. \\
A subset $B$ of $X$ is called a {\bf generating set} if $GB=X$, {\bf independent} if for $b_1\neq b_2\in B$ we have $Gb_1\cap Gb_2=\varnothing$, and a {\bf base} of $X$ if $B$ is a set that is both generating and independent. Clearly  $B$ is a base of $X$ if and only if $B$ contains one and only one element of any orbit $Gx$. Therefore two bases of a $G$-module $X$ have the same cardinality.

The next definition is not surprising,
\begin{definition} A convex base of a $G$-module $X$ is a base $B$ of $X$ that is convex as a subset of $X$.\end{definition}

\begin{remark} $G$-modules with a convex base were studied in \cite{base} for $G$ any abelian, multiplicatively written, totally ordered group. A strong characterization of them was given in Theorem 2.9. Here we restate this result for our case.\end{remark}

\begin{theorem} Let $G$ be a cyclic group and $X$ a $G$-module. The following are equivalent.\\
(i) $X$ has a convex base.\\
(ii) As a $G$-module, $X$ is isomorphic to $B\times G$ for some non empty chain $B$ (where the action of $G$ on $B\times G$ is defined by $g'(b,\, g)=(b,\ g'g)$).\end{theorem}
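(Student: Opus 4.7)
The plan is to establish (ii) $\Rightarrow$ (i) as a short verification and to do the real work in (i) $\Rightarrow$ (ii) by constructing an explicit $G$-module isomorphism. Throughout I will take the order on $B \times G$ to be the lexicographic order with the $G$-coordinate taking precedence: $(b_1, g_1) < (b_2, g_2)$ iff $g_1 < g_2$, or $g_1 = g_2$ and $b_1 < b_2$. A routine check shows that this order together with the stated action gives a $G$-module (coinitiality of each orbit $\{b\} \times G$ comes from the fact that the cyclic group $G$ has no least element), and that $B \times \{e\}$ is then a convex base of it: it meets every orbit in the unique point $(b,e)$, and any $(b,g)$ with $(b_1,e) \leq (b,g) \leq (b_2,e)$ must satisfy $e \leq g \leq e$, forcing $g=e$.

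For (i) $\Rightarrow$ (ii), I first observe that stabilizers must be trivial: since $G$ is cyclic, a nontrivial stabilizer would collapse some orbit $Gx$ to a finite set, contradicting the coinitiality axiom in an $X$ with no smallest element. Consequently each $x \in X$ has a unique representation $x = gb$ with $b \in B$ and $g \in G$, and the map $\varphi : X \to B \times G$ defined by $\varphi(gb) = (b,g)$ is a $G$-equivariant bijection. The real task is to verify order-preservation, and this is the single point where the convexity of $B$ enters essentially.

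To do that, suppose $b_1 < b_2$ in $B$ and $g_1 b_1 \leq g_2 b_2$ in $X$; set $h = g_2^{-1} g_1$. Applying axiom~(ii) with $g_2^{-1}$ transforms the inequality into $h b_1 \leq b_2$. If $h > e$ then, by triviality of stabilizers, $h b_1 > b_1$, so $b_1 < h b_1 \leq b_2$; convexity of $B$ then forces $h b_1 \in B$, contradicting the uniqueness of $b_1$ as the base element in its orbit. Hence $h \leq e$, i.e.\ $g_1 \leq g_2$, which is exactly what the chosen lex order on $B \times G$ requires when the first coordinates differ. The remaining case $b_1 = b_2$ reduces to the observation that $G$'s order transfers faithfully to any single orbit, again by trivial stabilizers. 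Since $X$ and $B\times G$ are linearly ordered and $\varphi$ is an order-preserving bijection, it is an order isomorphism.

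I expect the order-preservation step above to be the main obstacle, since it is the only place where convexity of $B$ does genuine work; everything else—checking the $G$-module axioms for $B \times G$ with the lex order, verifying that $\varphi$ is bijective and equivariant, and handling the within-orbit case—is essentially bookkeeping, enabled by the clean reduction to trivial stabilizers that the cyclic hypothesis on $G$ affords.
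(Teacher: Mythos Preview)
The paper does not actually prove this theorem; it is quoted as a restatement (specialized to cyclic $G$) of Theorem~2.9 of \cite{base}, so there is no proof in the present paper to compare against. Your argument therefore has to stand on its own, and the approach is sound: the lexicographic order on $B\times G$ with the $G$-coordinate dominant is indeed the correct one (it is the order implicit in the paper's later use of $X=\bigcup_{n\in\Z}g_0^{\,n}B$ with $\cdots<g_0^{-1}B<B<g_0B<\cdots$), the triviality-of-stabilizers observation is exactly the leverage afforded by cyclicity of $G$, and your use of convexity in the $b_1<b_2$ case is precisely where it is needed.

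There is one small omission. In verifying that $\varphi$ preserves order you treat the cases $b_1=b_2$ and $b_1<b_2$, but the case $b_1>b_2$ with $g_1b_1\le g_2b_2$ is never addressed, and your phrase ``when the first coordinates differ'' glosses over the fact that only one of the two possibilities has been checked. It is, however, the easiest of the three cases: multiplying by $g_1^{-1}$ gives $b_1\le (g_1^{-1}g_2)\,b_2$; if $g_1^{-1}g_2\le e$ this would force $b_1\le b_2$, a contradiction, so $g_1<g_2$ and hence $(b_1,g_1)<(b_2,g_2)$ in the lex order. Convexity is not even needed here. With this one line added, your proof is complete.
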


\begin{remark} By this theorem, given any cyclic group $G$ and any cardinal $\alpha>\aleph_0$ we can obtain a $G$-module $X$ with convex base, such that the cardinality of $X$ is $\alpha$.\end{remark}

We give now two examples of $G$-modules with a convex base.\\
  
Let $(K,\, |\ |)$ be a valued field, with value group $G$, a cyclic subgroup of $(\R^+, \cdot)$.

\begin{example}\label{ex1} Define $X_1$ as $B_1\times G$, with $B_1:=(0,\, 1]\subset\R^+$ (Notice that $B_1$ is not a well ordered set, since it is not isomorphic to an ordinal).\end{example}
\begin{example}\label{ex2} Now let $X_2 := B_2\times G$ where $B_2$ is the ordinal $\omega_1$. (Notice that $X_2$ cannot be immersed in $\R^+$, since $\omega_1$ has no cofinal sequence).\end{example}

\section{Banach spaces over a discretely valued field with norms on a $G$-module with convex base}

We shall study NHS in the case the field $K$ has a valuation of rank one.\\

To prove the following lemma we will use the ``main tool'' of \cite{two}. \\
Let $E=(E,\,\|\ \|)$ be an $X$-normed space and $x_0\in X$. The $G$-module map $\varphi:X\to G^\#$ defined by $\varphi(x)=\sup_{G^\#}\{g\in G: gx_0\leq x\}$ induces a new norm in $E$ with values in $G^\#$. It turned out that these two norms are equivalent. The space $E$ provided with the new norm is called $E_\varphi$.

\begin{lemma}\label{discrete} If $E$ is a NHS over a field $K$ whose valuation has rank one then the value group of $K$ is cyclic.\end{lemma}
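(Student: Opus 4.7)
The plan is to reduce the statement to the classical rank-one, $\R^+$-valued setting where van Rooij's characterization of NHS already applies. Concretely, fix any $x_0 \in X$ and build, as described in the paragraph preceding the lemma, the $G$-module map $\varphi : X \to G^\#$ given by $\varphi(x)=\sup_{G^\#}\{g\in G : gx_0 \leq x\}$. By the ``main tool'' of \cite{two}, this produces on $E$ an equivalent norm $\|\cdot\|_\varphi$ with values in $G^\#$. Since the valuation of $K$ has rank one, $G$ sits inside $(\R^+,\cdot)$ and its Dedekind completion $G^\#$ embeds into $[0,\infty]$, so $E_\varphi$ is a Banach space over $K$ with an $\R^+$-valued non-archimedean norm; that is precisely the classical framework.

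Next I would argue that $E_\varphi$ is itself a NHS. Equivalence of $\|\cdot\|$ and $\|\cdot\|_\varphi$ already forces the two norms to have the same closed subspaces, so the question reduces to showing that orthocomplementation is preserved. Because $\varphi$ is a $G$-module map, it is strictly monotone and respects the max-relations that characterise orthogonality in the ultrametric sense, so a decomposition $E=D\oplus D'$ with $\|d+d'\|=\max\{\|d\|,\|d'\|\}$ becomes, after applying $\varphi$ to the norms, a decomposition satisfying $\|d+d'\|_\varphi=\max\{\|d\|_\varphi,\|d'\|_\varphi\}$. Hence every closed subspace of $E_\varphi$ still admits an orthocomplement.

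Having established that $E_\varphi$ is a rank-one NHS with $\R^+$-valued norm, I would invoke the classical theorem (due to van Rooij, cf.\ \cite{Rooij}) stating that in that situation the value group of $K$ must be discrete, i.e.\ cyclic. Since the value group of $K$ is intrinsic to the field and unaffected by the change of norm on $E$, this conclusion transports back to the original $E$ and yields the claim.

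The main obstacle I expect is the middle paragraph: verifying that the construction $\varphi$ really carries the orthocomplement property across, rather than merely yielding an equivalent norm. Two norms can be equivalent as topologies without sharing the same notion of orthogonality, so the argument has to exploit the specific $G$-equivariant definition of $\varphi$ and the way it interacts with the ultrametric inequality. A minor secondary point is to confirm that the exact statement of van Rooij being cited is directly applicable here (handling trivially the uninteresting finite-dimensional situation); once those two steps are secured, the rest of the proof is essentially bookkeeping.
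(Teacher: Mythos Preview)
Your plan is essentially the paper's own proof: pass to $E_\varphi$ via the $G$-module map $\varphi$, observe that $G^\#\subseteq(0,\infty)$ so that $E_\varphi$ lives in the classical rank-one setting, and then invoke van Rooij's Theorem~5.16. The only difference is that the paper disposes of your ``main obstacle'' by citing \cite{two}, Theorem~2.7, which already records that $E$ NHS implies $E_\varphi$ NHS; your direct sketch of this step is fine in spirit, but note that $\varphi$ is in general only monotone, not strictly monotone---fortunately monotonicity alone is what makes $\varphi$ commute with $\max$ and hence preserve the orthogonality relation.
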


\begin{proof}
If the value group $G\subseteq (0, \infty)$ then so is its Dedekind completion.\\
By \cite{two}  Theorem 2.7 if $E$ is a NHS then so is $E_\varphi$ and by \cite{Rooij} Theorem 5.16, if each closed subspace of $E_\varphi$ has an orthocomplement then the valuation is discrete.\end{proof}

This has a strong consequence.

\begin{theorem}\label{Ban1.1} Let $G=\langle g_0\rangle$ be a cyclic group. Each $G$-module $X$ has a convex base. In fact, $A:=[a,\, g_0a)$ $($as well as $(a,\, g_0a] )$ is a convex base of $X$  for any $a\in X$.\end{theorem}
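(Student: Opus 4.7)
The plan is to pick a generator $g_0$ of $G$ with $g_0 > 1$ (this is possible since $G$ must be nontrivial: a trivial $G$ forces every orbit to be a singleton, which is incompatible with coinitiality when $X$ has no smallest element), and then show that $A := [a, g_0 a)$ meets every orbit of $G$ in exactly one point. Convexity of $A$ is immediate, being an interval of $X$; the work is in establishing that $A$ is both generating and independent.

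The preliminary fact I would record first is that the action is strict in each variable. For the first variable: if $g_0 y = y$ for some $y$, then $Gy = \{y\}$, and coinitiality of $Gy$ in $X$ would force $y$ to be the smallest element of $X$, contradicting the hypothesis. So $g_0 y > y$ for every $y$, and more generally $n < m$ in $\mathbb{Z}$ gives $g_0^n y < g_0^m y$ strictly. For the second variable: each $g \in G$ acts as a bijection of $X$ (with inverse $g^{-1}$), so axiom (ii) upgrades to $y_1 < y_2 \Rightarrow gy_1 < gy_2$.

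For generating, given $x \in X$, I would consider $S := \{n \in \mathbb{Z} : g_0^n x \geq a\}$. Cofinality of the orbit $Gx$ (noted in the Remark after Definition 1.1) gives $S \neq \varnothing$, while coinitiality together with the strict monotonicity above forces $S$ to be bounded below. Let $n_0 := \min S$. Then $g_0^{n_0} x \geq a$ by definition, while $g_0^{n_0 - 1} x < a$; applying the strictly increasing map $g_0$ to the latter yields $g_0^{n_0} x < g_0 a$. Hence $g_0^{n_0} x \in A$, so $GA = X$.

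For independence, suppose $b_1, b_2 \in A$ lie in the same orbit, say $b_2 = g_0^n b_1$ with $n \geq 1$. Since $b_1 \geq a$, strict monotonicity of the powers of $g_0$ gives $b_2 = g_0^n b_1 \geq g_0 b_1 \geq g_0 a$, contradicting $b_2 < g_0 a$. Thus distinct elements of $A$ lie in distinct orbits, and $A$ is a convex base; the argument for $(a, g_0 a]$ is symmetric. The main (mild) obstacle is being scrupulous about using the strict form of both monotonicity axioms, which in turn rests on the no-smallest-element clause and on group actions being invertible.
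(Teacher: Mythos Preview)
Your argument is correct and, for the generating step, essentially coincides with the paper's: both locate the unique integer power of $g_0$ that lands $x$ in $[a,\,g_0a)$ via the coinitiality/cofinality of the orbit and then apply $g_0$ to the strict inequality $g_0^{n_0-1}x<a$. The only difference is one of packaging: the paper imports convexity and independence of $A$ (as a base of the submodule $GA$) from Lemma~4.10 of \cite{base} and then proves only $GA=X$, whereas you supply a direct, self-contained proof of independence together with the preliminary observation that the action is strictly increasing in each variable. Your version buys independence from the external reference at the cost of a few extra lines; otherwise the two proofs are the same.
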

\begin{proof} It is proved in \cite{base}, Lemma 4.10  that for any $a\in X$ the set $A$ is a convex base of the submodule $GA$. Thus, we only need to show that $G[a,\, g_0a)=X$. So, let $x\in X$. As $\{g_0^nx\}_{n\in\Z}$ is cofinal and coinitial there exists $m\in\Z$ such that $g_0^mx<a\leq g_0^{m+1}x$. Then $g_0^{m+1}x<g_0a$ so $g_0^{m+1}x\in [a,\, g_0a)$ i.e. $x\in G[a,\, g_0a)$.

\end{proof}

Thus in the spaces described by van Rooij (\cite{Rooij}, theorems 5.13 and 5.16) the set of norms has, as a $G$-module, always a convex base.\\

But the standard construction given below shows that there are a host of possibilities for new examples of spaces in which the set of norms is a $G$-module with a convex base.

\begin{example}
Let $B$ a chain, $E$ be the space of linear combinations of the set $V:=\{v_b: b\in B\}$ over a valued field $K$ with value group $G=\langle g_0\rangle$, a cyclic group. We define for each $b\in B$ the norm $\|v_b\|=(b, 1)$ and for any $v:=\sum\limits_{b}k_b v_b \in E$, $\|v\|=\max\{\|k_bv_b\|\}$. Therefore $\|E\setminus\{0\}\|=B\times G$.
\end{example}

From now on $G=\langle g_0\rangle$ is a cyclic group, $X$ is a $G$-module and  $E$ is an $X$-normed Banach space (that is, complete in the topology of the $X$-valued norm function).\\
Notice that by Theorem \ref{Ban1.1}, the $G$-module $X$ has a convex base.

\begin{lemma}\label{Ban2.1} Let $E$ be an $X$-normed Banach space over $K$. Then the valuation topology on $K$ and the norm topology on $E$ are (ultra)metrizable.\end{lemma}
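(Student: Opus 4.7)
The critical ingredient is Lemma~\ref{discrete}: the value group $G=\langle g_0\rangle$ is cyclic, and hence countable. Without loss of generality $g_0<1$ in $(\R^+,\cdot)$. My plan is to extract countable neighborhood bases at $0$ consisting of additive subgroups on both $K$ and $E$, and then manufacture an invariant ultrametric in the standard way.

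For $K$ the matter is transparent: $U_n:=\{k\in K:|k|<g_0^n\}$, $n\in\N$, is a decreasing countable neighborhood base at $0$ for the valuation topology, and since the valuation is non-archimedean the function $d_K(a,b):=|a-b|$ is already an invariant ultrametric generating that topology. For $E$, I would fix any $x_0\in X$ and look at its orbit $Gx_0=\{g_0^n x_0:n\in\Z\}$: it is countable, and by axiom~(iii) together with the remark following the $G$-module definition it is both coinitial and cofinal in $X$. Hence the balls $V_n:=\{v\in E:\|v\|<g_0^n x_0\}$, $n\in\Z$, form a countable decreasing family of additive subgroups of $E$ (balls of a non-archimedean norm) whose union is $E$ by cofinality of $Gx_0$ and whose intersection is $\{0\}$ by coinitiality; this is a countable neighborhood base at $0$ for the norm topology.

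To produce the ultrametric on $E$, for $v\neq 0$ I would set $N(v):=\max\{n\in\Z:v\in V_n\}$, a well-defined integer (non-empty by cofinality of the orbit and bounded above by coinitiality), and define $\rho(v):=2^{-N(v)}$, $\rho(0):=0$, $d_E(v,w):=\rho(v-w)$. Translation-invariance and symmetry are immediate, and the ultrametric inequality $\rho(a+b)\le\max\{\rho(a),\rho(b)\}$ holds because $V_{\min(N(a),N(b))}$ is a subgroup containing both $a$ and $b$, hence also their sum. That $d_E$ generates the norm topology is clear from the identity $V_n=\{v:\rho(v)\le 2^{-n}\}$. The only point requiring any care is the simultaneous use of cofinality and coinitiality of $Gx_0$, needed to make $N(v)$ a well-defined integer rather than $\pm\infty$; beyond that there is no genuine obstacle, since everything reduces to the countability of the cyclic group $G$.
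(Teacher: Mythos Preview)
Your argument is correct and follows essentially the same route as the paper: both identify the countable neighbourhood bases $\{\lambda\in K:|\lambda|<g_0^n\}$ and $\{v\in E:\|v\|<g_0^n x_0\}$ at $0$ (the paper takes $x_0=\|a\|$ for a fixed nonzero $a\in E$), with you going one step further to write down the invariant ultrametric explicitly rather than appealing to the standard metrizability criterion for topological groups with a countable base at $0$. One small correction: your invocation of Lemma~\ref{discrete} is misplaced, since that lemma assumes $E$ is a NHS, which is not part of the hypotheses here; the cyclicity of $G$ is simply the standing assumption declared immediately before the present lemma.
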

\begin{proof} Fixing an $a\in E$, $a\neq 0$ we see that $\{\lambda\in K: |\lambda|<g_0^n\}_{n\in\Z}$ and $\{ v\in E:\|v\|<g_0^n\|a\|\}_{n\in\Z}$ form neighbourhood bases of 0 in $K$ and $E$, respectively.\end{proof}

\begin{theorem}\label{Ban2.11} Let $E$ have an orthogonal base $\{e_1,\, e_2,\, \ldots\}$. Then there is an orthogonal base $\{f_1,\, f_2,\, \ldots\}$ such that $\|f_1\|>\|f_2\|>\cdots$ and $f_n\to 0$.\end{theorem}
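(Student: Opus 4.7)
The plan is to produce $\{f_n\}$ from $\{e_n\}$ purely by rescaling each vector: within a single $G$-orbit in $X$ the norm can be shifted freely along $G=\langle g_0\rangle$, and this freedom turns out to be enough. By Theorem~\ref{Ban1.1} the $G$-module $X$ admits a convex base $B$, and by the structure theorem of Section~1 every $x\in X$ has a unique decomposition $x=gb$ with $g\in G$, $b\in B$; in particular I may write $\|e_n\|=g_nb_n$.

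The essential geometric observation, which I would verify in one line from convexity of $B$ combined with the fact that each $G$-orbit meets $B$ in exactly one point, is that (assuming $g_0>1$ without loss of generality) the ``levels'' $g_0^kB$ ($k\in\Z$) partition $X$ into strictly separated blocks: every element of $g_0^kB$ lies strictly below every element of $g_0^{k+1}B$. (Otherwise some $g_0b$ with $b\in B$ would lie in a closed interval with endpoints in $B$, hence in $B$ by convexity, forcing $g_0b=b$.) Now by Lemma~\ref{discrete} the value group of $K$ is exactly $G$, so for each $n$ there is a $\lambda_n\in K$ with $|\lambda_n|=g_n^{-1}g_0^{-n}$. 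Set $f_n:=\lambda_n e_n$; then $\|f_n\|=g_0^{-n}b_n\in g_0^{-n}B$.

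Three things remain to verify, all routine. First, $\{f_n\}$ is still an orthogonal base: scaling an orthogonal system by nonzero scalars preserves orthogonality, and if $v=\sum\mu_n e_n$ (with $\mu_n e_n\to 0$) is the unique expansion of $v$ with respect to $\{e_n\}$, then $v=\sum(\mu_n/\lambda_n)f_n$ is the corresponding expansion with respect to $\{f_n\}$, the summability condition $(\mu_n/\lambda_n)f_n=\mu_n e_n\to 0$ being automatic. Second, the norms strictly decrease: for $n<m$ one has $-n>-m$, so the separation of levels gives $\|f_n\|\in g_0^{-n}B>g_0^{-m}B\ni\|f_m\|$. Third, $f_n\to 0$: any $y\in X$ lies in some level $g_0^kB$, and $\|f_n\|<y$ as soon as $-n<k$, i.e.\ for all $n>-k$. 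There is no genuine obstacle; once the stratification of $X$ by the levels $g_0^kB$ is made explicit, the construction is essentially forced by it.
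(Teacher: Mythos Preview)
Your argument is correct and shares the paper's core idea: produce $f_n$ as a scalar multiple of $e_n$ with norm pushed far enough down. The paper carries this out more directly, without invoking the convex base stratification at all: it fixes an auxiliary sequence $s_1>s_2>\cdots\to 0$ in $X$ (for instance $s_n=g_0^{-n}\|e_1\|$) and then, inductively, chooses a scalar multiple $f_n$ of $e_n$ with $\|f_n\|<\min(\|f_{n-1}\|,s_{n-1})$, using only that each orbit $G\|e_n\|$ is coinitial in $X$. Your explicit placement of $\|f_n\|$ into the level $g_0^{-n}B$ is a tidier bookkeeping device and makes the strict decrease and the convergence to $0$ transparent, at the cost of first establishing the level separation $g_0^kB<g_0^{k+1}B$.

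One small correction: the appeal to Lemma~\ref{discrete} to obtain that the value group of $K$ equals $G$ is misplaced. That lemma assumes $E$ is a NHS, which is not among the hypotheses here, and its conclusion is merely that the value group is cyclic. What you actually need is already part of the standing assumptions of Section~2 (stated just before Lemma~\ref{Ban2.1}): $G=\langle g_0\rangle$ \emph{is} the value group of $K$, so every element of $G$ is realized as $|\lambda|$ for some $\lambda\in K^*$.
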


\begin{proof} Let $s_1,\, s_2,\, \ldots \in X$ such that $s_1>s_2>\cdots$ and $s_n\to 0$. Put $f_1:=e_1$. There is an $n_1\in\Z$ such that $\|g_0^{n_1}e_2\|<\min\{\|e_1\|,\, s_1\}$. Put $f_2:=g_0^{n_1}e_2$, and so on. Inductively we arrive at $f_1,\, f_2,\, \ldots$, multiples of $e_1,\, e_2,\,\ldots$ respectively (hence orthogonal) such that $\|f_n\|<s_n$ for each $n\in\N$. Hence $f_n\to 0$.\end{proof}

We now prove the following  theorems on the structure of Banach spaces. It appears in a more general setup in \cite{morado}, but we include it with its short proof for convenience.

\begin{theorem}\label{Ban2.5} A Banach space of countable type has an orthogonal base.\end{theorem}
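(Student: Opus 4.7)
The plan is to carry out the familiar non-archimedean Gram--Schmidt construction. Since $E$ is of countable type, I would start by fixing a linearly independent countable set $\{v_n\}_{n \geq 1} \subset E$ whose linear span is dense in $E$ (obtained from any countable dense subset by discarding linear dependencies). I then construct orthogonal vectors $e_1, e_2, \ldots$ inductively so that $[e_1, \ldots, e_n] = [v_1, \ldots, v_n]$ for every $n$: set $e_1 := v_1$, and having produced orthogonal $e_1, \ldots, e_n$, pick $f_n \in D_n := [e_1, \ldots, e_n]$ with $\|v_{n+1} - f_n\| = d(v_{n+1}, D_n)$ and set $e_{n+1} := v_{n+1} - f_n$. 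A standard ultrametric argument then delivers orthogonality: for every $g \in D_n$,
\[
\|e_{n+1} + g\| = \|v_{n+1} - (f_n - g)\| \geq d(v_{n+1}, D_n) = \|e_{n+1}\|,
\]
which combined with $\|e_{n+1} + g\| \leq \max(\|e_{n+1}\|, \|g\|)$ forces equality, extending the inductive orthogonality to $\{e_1, \ldots, e_{n+1}\}$.

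The main obstacle is showing the minimizer $f_n$ actually exists. The key ingredient is Lemma \ref{discrete}: the value group $G = \langle g_0\rangle$ is cyclic, so $|K|$ is discrete in $(0, \infty)$, and by Theorem \ref{Ban1.1} the set of norms carries the decomposition $X \cong B \times G$. For a minimizing sequence $y_k = \sum_{i=1}^n \lambda_i^{(k)} e_i$ with $r_k := \|v_{n+1} - y_k\|$ decreasing to $d := d(v_{n+1}, D_n)$, the ultrametric inequality together with orthogonality of $\{e_1, \ldots, e_n\}$ yields
\[
\max_i |\lambda_i^{(k)} - \lambda_i^{(\ell)}|\, \|e_i\| = \|y_k - y_\ell\| \leq \max(r_k, r_\ell).
\]
For each coordinate $i$, the right-hand side eventually stabilizes along each orbit of $X$ because $G$ is cyclic, so the differences $|\lambda_i^{(k)} - \lambda_i^{(\ell)}|$ in the discrete group $|K|$ tend to zero; each $(\lambda_i^{(k)})_k$ is thus Cauchy in $K$, and completeness of $K$ produces limits $\lambda_i^{(k)} \to \lambda_i^*$. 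The vector $f_n := \sum_i \lambda_i^* e_i \in D_n$ then realizes the distance. Finite-dimensional $D_n$ is closed in $E$, so $d > 0$ whenever $v_{n+1} \notin D_n$ and hence $e_{n+1} \neq 0$.

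Finally, $\{e_n\}_{n \geq 1}$ is an orthogonal base. Its closed linear span contains every $v_n$ and hence equals $E$. Given $x \in E$, approximating $x$ by elements of the increasing spans $[e_1, \ldots, e_N]$ and using the uniqueness of orthogonal expansions at each stage, the coefficients stabilize and deliver a norm-convergent series $x = \sum_n \lambda_n e_n$ in the metrizable topology of Theorem \ref{Ban2.1}. The genuinely hard part is the distance attainment in the second paragraph---without the cyclicity of $G$ from Lemma \ref{discrete}, a minimizing sequence in the $X$-valued norm could easily miss its infimum, and the whole inductive construction would break down.
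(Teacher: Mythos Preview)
Your overall Gram--Schmidt scheme is the same as the paper's, but the justification of the key step---distance attainment on $D_n$---has a genuine gap.

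First, citing Lemma~\ref{discrete} is inappropriate: that lemma has the hypothesis that $E$ is already a NHS, which is precisely what one is \emph{not} assuming here. Cyclicity of $G$ is a standing hypothesis in this section, not something derived.

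Second, and more seriously, the Cauchy argument fails. From
\[
|\lambda_i^{(k)} - \lambda_i^{(\ell)}|\,\|e_i\| \;\leq\; \|y_k - y_\ell\| \;\leq\; \max(r_k, r_\ell)
\]
you cannot conclude $|\lambda_i^{(k)} - \lambda_i^{(\ell)}| \to 0$: the right-hand side decreases to $d = d(v_{n+1}, D_n) > 0$, not to $0$. The decomposition $X \cong B \times G$ only tells you that the decreasing, bounded-below sequence $(r_k)$ eventually sits in a single shell $g_0^m B$; writing $\|e_i\| \in g_0^{p_i} B$, this yields merely the fixed bound $|\lambda_i^{(k)} - \lambda_i^{(\ell)}| \leq g_0^{m - p_i}$, not a Cauchy condition. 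The phrase ``stabilizes along each orbit'' does not rescue this, and a minimizing sequence in $D_n$ need not converge at all.

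The paper sidesteps the issue: since $K$ is complete with a discrete valuation, $K$ is spherically complete, and hence every finite-dimensional subspace $D_n$ is spherically complete (cf.\ \cite{morado}). The nested balls $\{y \in D_n : \|v_{n+1} - y\| \leq r_k\}$ therefore have a common point, and that point realizes the distance. Replace your second paragraph with this one-line invocation of spherical completeness and your proof is complete and agrees with the paper's.
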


\begin{proof} We may assume $E$ is an infinite dimensional Banach space of countable type. Then there exists a linearly independent set $\{v_1,\, v_2,\,\ldots\}$ whose linear hull is dense. Using spherical completeness of $D_n:=[v_1,\, v_2,\,\ldots,\, v_n]$ we construct inductively $e_1,\, e_2,\,\ldots\in E$ such that $[e_1,\, e_2,\,\ldots,\, e_n]=D_n$ and $e_{n+1}\perp D_n$ for each $n$. Then clearly $e_1,\, e_2,\,\ldots$ is an orthogonal base for $E_n$.
\end{proof}
\bigskip
{\bf We consider now $c_0(I)$ and define the expression ``to contain $c_0$''.}\\

The wording of the following definitions, given in \cite{Rooij}, have been slightly modified in order to include our present cases.

\begin{definition} Let $I$ be an index set, the space $c_0(I)$ is the space of all $(\lambda_i)_{i\in I}\in \prod\limits_IK$ for which
the set $ \{i\in I: |\lambda_i|\geq g\}$  is finite for all $g\in G$.\\
It is a Banach space with respect to the $G$-norm $(\lambda_i)_{i\in I}\mapsto \max_i|\lambda_i|$. The unit vectors of $c_0(I)$ form an orthogonal base of $c_0(I)$. As usual we will write $c_0$ for $c_0(\N)$.
\end{definition}
We have

\begin{definition} A $K$-Banach space $E$ is said  {\bf to contain $c_0$ } if there exist $a\in X$ and an orthogonal sequence $e_1,\, e_2,\,\ldots$ in $E$ such that $\|e_n\|=a$ for each $n$.
\end{definition}

\begin{theorem}\label{Ban2.14} Let $E$ be a Banach space, let $X:=\|E\setminus\{0\}\|$ with convex base $B$. The following are equivalent.

($\alpha$) $E$ does not contain $c_0$.

($\beta$) For all orthogonal systems $\{e_i:i\in I\}$ we have that the set $\{i\in I: \|e_i\|\in Gb\}$ is finite for each $b\in B$.

($\gamma$) There is a maximal orthogonal system  $\{e_i:i\in I\}$ such that the set $\{i\in I:\|e_i\|\in Gb\} $ is finite for each $b\in B$.\end{theorem}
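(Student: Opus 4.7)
The plan is to close the cycle $(\alpha)\Rightarrow(\beta)\Rightarrow(\gamma)\Rightarrow(\alpha)$. For $(\alpha)\Rightarrow(\beta)$ I argue by contrapositive: given an orthogonal system $\{e_i\}_{i\in I}$ with $\{i:\|e_i\|\in Gb\}$ infinite for some $b\in B$, pick a countable subsystem $\{e_{i_n}\}$ with $\|e_{i_n}\|=g_nb$; since $G$ is the value group of $K$ we can choose $k_n\in K$ with $|k_n|=g_n$, and the rescaled vectors $f_n:=k_n^{-1}e_{i_n}$ form an orthogonal sequence of constant norm $b$, so $E$ contains $c_0$. The implication $(\beta)\Rightarrow(\gamma)$ is immediate: Zorn's lemma produces a maximal orthogonal system, which inherits the finiteness property from $(\beta)$.

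The heart of the theorem is $(\gamma)\Rightarrow(\alpha)$, which I prove by contradiction. Assume $\{e_i:i\in I\}$ is maximal orthogonal with the finiteness property, and assume $E$ contains $c_0$ via an orthogonal sequence $\{f_n\}$; after rescaling as above we may take $\|f_n\|=b_0\in B$. Let $J:=\{i:\|e_i\|\in Gb_0\}$, finite by hypothesis, and $W:=\mathrm{span}\{e_i:i\in J\}$, finite-dimensional. The strategy is to manufacture an infinite orthogonal subset of $W$.

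By maximality, no nonzero vector of $E$ is orthogonal to $M:=\overline{\mathrm{span}\{e_i\}}$, so for each $n$ there is $w_n\in M$ with $\|f_n-w_n\|<b_0$; the strong triangle inequality then forces $\|w_n\|=b_0$. Writing $w_n=\sum_i c_{n,i}e_i$ and splitting $w_n=w_n^J+w_n^\perp$ with $w_n^J:=\sum_{i\in J}c_{n,i}e_i\in W$, the key observation is that for $i\notin J$ the value $|c_{n,i}|\|e_i\|$ lies in an orbit disjoint from $Gb_0$, so being $\leq b_0$ it is strictly less than $b_0$; hence $\|w_n^\perp\|<b_0$ and $\|w_n^J\|=b_0$.

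Setting $s_n:=f_n-w_n^J$ gives $\|s_n\|<b_0$, and for any finite linear combination the orthogonality of $\{f_n\}$ together with the strict bounds $\|s_n\|<b_0$ yields $\|\sum\lambda_n s_n\|<\max_n|\lambda_n|b_0=\|\sum\lambda_n f_n\|$; the strong triangle inequality then delivers $\|\sum\lambda_n w_n^J\|=\max_n|\lambda_n|\|w_n^J\|$. Thus $\{w_n^J\}_{n\in\N}$ is an infinite orthogonal, hence linearly independent, subset of the finite-dimensional space $W$, contradicting $\dim W<\infty$. The subtle point, and exactly where the base structure of $X$ enters essentially, is the strict separation $\|w_n^\perp\|<b_0$ between the $J$ and non-$J$ parts of $w_n$; this ultimately rests on the fact that a base contains one and only one element of each $G$-orbit.
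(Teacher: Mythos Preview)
Your argument for $(\alpha)\Rightarrow(\beta)$ and $(\beta)\Rightarrow(\gamma)$ matches the paper's. For $(\gamma)\Rightarrow(\alpha)$ you take a genuinely different route: the paper invokes \cite{morado}~2.4.12 (any two maximal orthogonal systems have the same orbit multiplicities), then simply extends the constant-norm sequence $a_1,a_2,\ldots$ to a maximal orthogonal system and observes that this second maximal system violates the finiteness condition, contradicting the cited lemma. You instead give a self-contained argument: you project each $f_n$ onto the finite-dimensional orbit subspace $W=[e_i:i\in J]$ and use a Perturbation-Lemma computation to show that the projections $w_n^J$ remain orthogonal of norm $b_0$, which is impossible in a space of dimension $|J|<\infty$. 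Your approach buys independence from the external reference and makes explicit where the base property (one orbit representative in $B$) is used; the paper's approach is shorter but hides the mechanism inside the cited result.

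One small repair is needed. You take $w_n\in M=\overline{[e_i:i\in I]}$ and then write $w_n=\sum_i c_{n,i}e_i$. A maximal orthogonal system need not be an orthogonal base of the closure of its span (e.g.\ $f_n=e_n-e_{n+1}$ in $c_0$), so an element of $M$ may have no such expansion. The fix is immediate: maximality says $f_n$ is not orthogonal to the \emph{linear span} $[e_i:i\in I]$, so you may choose $w_n$ already as a finite combination $\sum_{i\in F_n}c_{n,i}e_i$ with $\|f_n-w_n\|<b_0$. With $w_n$ a finite sum, the splitting $w_n=w_n^J+w_n^{\perp}$ is well defined, $\|w_n^{\perp}\|=\max_{i\in F_n\setminus J}|c_{n,i}|\,\|e_i\|$ is a finite maximum of terms each strictly below $b_0$, and the rest of your computation goes through verbatim.
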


\begin{proof} 
($\alpha\Rightarrow \beta$) We may assume $I$ infinite. If $\{i\in I: \|e_i\|\in Gb\}$ is infinite for some $b\in B$ we can select $\lambda_i\in K^*$, ($i\in I$) and $i_1,\, i_2,\, \ldots\in I$ (distinct) such that $\|\lambda_{i_n}e_{i_n}\|=b$ for each $n$, conflicting with ($\alpha$).

($ \beta\Rightarrow\gamma$) is obvious. By Zorn the existence of a maximal orthogonal system is assured.

($\gamma \Rightarrow\alpha$) If $\{f_i:i\in I\}$ is a second maximal orthogonal system then by \cite{morado} 2.4.12, $\{i\in I: \|f_i\|\in Gb\}$ is finite for each $b\in B$. Now if $E$ has orthogonal vectors $a_1,\, a_2,\, \dots$ of the same length we can extend $\{ a_1,\, a_2,\, \ldots\}$ to a maximal orthogonal system. This one violates ($\gamma$).

\end{proof}

\section{Norm Hilbert spaces over $G$-modules with a convex base}

\begin{definition} An $X$-normed Banach space is a Norm Hilbert space (NHS) iff every closed subspace $D$ of $E$ is orthocomplemented. \end{definition}
This section is the center of our research. We start with several characterization of Norm Hilbert spaces very much in the spirit of van Rooij's classical book  ( \cite{Rooij} Theorems 5.13 and 5.16). But in his work the set of norms is a subset of $(0, \infty)$ and in our case some assertions do not necessarily hold. Thus we prefer to give detailed proofs of the equivalences in theorem \ref{Ban2.3}.

\begin{remark}\label{NHS3.1} Recall (\cite{morado} Proposition 4.1.2) that $E$ is a NHS iff each maximal orthogonal system in $E$ is an orthogonal base, iff for each closed subspace $D$ the set $\{\|v-d\|: d\in D\}$ has a minimum in $X\, \cup\{0\}$.  \end{remark}

\begin{theorem}\label{Ban2.3} Let $K$ be a valued field, $G$ a cyclic group, value group of $K$, $X$ a $G$-module with convex base and $E$ a Banach space over $K$. Then the following are equivalent.

\begin{itemize}
\item[($\alpha$)] $E$ is a NHS.
\item[($\beta$)] If $e_1,\, e_2,\, \ldots\in E$ are orthogonal and $\|e_1\|>\|e_2\|>\cdots$ then $e_n\rightarrow 0$.
\item[($\gamma$)] If $v_1,\, v_2,\, \ldots\in E$ and $\|v_1\|> \|v_2\|>\cdots$ then $v_n\rightarrow 0$.
\item[($\delta$)] Each closed subspace of countable type is a NHS.
\item[($\epsilon$)] Each closed hyperplane is orthocomplemented.
\item[($\iota$)] $E$ has an orthogonal base and is spherically complete.
\end{itemize} \end{theorem}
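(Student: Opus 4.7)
The natural strategy is a cycle of implications. Trivially $(\alpha)\Rightarrow(\epsilon)$ (hyperplanes are closed subspaces) and $(\gamma)\Rightarrow(\beta)$. I plan to close the circuit $(\alpha)\Rightarrow(\delta)\Rightarrow(\beta)\Rightarrow(\gamma)\Rightarrow(\iota)\Rightarrow(\alpha)$, and separately establish $(\epsilon)\Rightarrow(\beta)$ so that the hyperplane condition is plugged back into the chain.

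For $(\alpha)\Rightarrow(\delta)$, a closed subspace $F\subseteq E$ of a NHS inherits the property: given $D$ closed in $F$, $D$ is closed in $E$ and so by $(\alpha)$ admits an orthocomplement $D^\perp$ in $E$; then $D^\perp\cap F$ orthocomplements $D$ in $F$. For $(\gamma)\Rightarrow(\iota)$, spherical completeness is obtained by supposing a nest $B(a_n,r_n)$ has empty intersection, extracting $b_n\in B_n\setminus B_{n+1}$, and checking that the differences $b_n-b_{n+1}$ satisfy $r_{n+1}<\|b_n-b_{n+1}\|\le r_n$ and hence form a sequence of strictly decreasing norms bounded below by the radii; if the $r_n$ fail to tend to $0$, this contradicts $(\gamma)$, so $r_n\to 0$, the centres are Cauchy and converge into $\bigcap B_n$. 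An orthogonal base is then produced by Zorn (a maximal orthogonal system exists) and by observing that $(\gamma)$ forces any vector's orthogonal expansion to converge, since successive remainders have strictly decreasing norms. Finally $(\iota)\Rightarrow(\alpha)$ is the classical fact that spherically complete ultrametric Banach spaces admit best approximations from closed subspaces, which by Remark~\ref{NHS3.1} is exactly $(\alpha)$.

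The substantive step is $(\delta)\Rightarrow(\beta)$. Assume $(e_n)$ is orthogonal with $\|e_n\|$ strictly decreasing and $e_n\not\to 0$, so $\|e_n\|\ge a$ for some $a\in X$. Then $F:=\overline{[e_1,e_2,\ldots]}$ is of countable type, hence is a NHS by $(\delta)$. Inside $F$ I take $D:=\overline{[e_1-e_2,\ e_2-e_3,\ \ldots]}$. Convergence of the telescoping partial sums $\sum\gamma_k(e_k-e_{k+1})$, combined with $\|e_n\|\ge a$, forces any $d=\sum\beta_n e_n\in D$ to satisfy $\sum_n\beta_n=0$; in particular $e_1\notin D$. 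Choosing $d=e_1-e_{N+1}\in D$ yields $\|e_1-d\|=\|e_{N+1}\|$, while an ultrametric argument on the coefficient expansions gives the matching lower bound, so the infimum of $\{\|e_1-d\|:d\in D\}$ equals $\inf_n\|e_n\|$. Because $X$ has a convex base, this infimum need not be realised in $X\cup\{0\}$, contradicting the minimum-characterisation of NHS in Remark~\ref{NHS3.1}.

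For $(\beta)\Rightarrow(\gamma)$ (and the similar $(\epsilon)\Rightarrow(\beta)$): given $v_1,v_2,\ldots$ with $\|v_1\|>\|v_2\|>\cdots$ and $v_n\not\to 0$, I would perform a Gram--Schmidt orthogonalisation inside the finite-dimensional (hence spherically complete over $K$) subspaces $[v_1,\ldots,v_n]$, producing orthogonal $w_k=v_{m_k}-u_{k-1}$ with $u_{k-1}$ a best approximation in $[w_1,\ldots,w_{k-1}]$; selecting the indices $m_k$ so that $\|v_{m_k}\|<\|w_{k-1}\|$ delivers strict decrease $\|w_k\|<\|w_{k-1}\|$, and a separate bookkeeping ensures a positive lower bound on $\|w_k\|$, thereby violating $(\beta)$. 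The main obstacle of the whole proof is precisely this last step: the naive Gram--Schmidt does not automatically preserve both strict decrease and the lower bound on $\|w_k\|$, and a case analysis based on whether the convex-base component of $\|v_{m_k}\|$ in $X\cong B\times G$ (Theorem~\ref{Ban1.1}) coincides with that of $\|w_k\|$ — orthogonality being automatic in the $B$-direction and demanding an approximation argument only along a single $G$-orbit — is needed to control both coordinates simultaneously.
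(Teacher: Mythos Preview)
Your cycle has a genuine gap at $(\beta)\Rightarrow(\gamma)$. The Gram--Schmidt route cannot be completed as you describe: nothing prevents the remainders $w_k=v_{m_k}-u_{k-1}$ from having norms tending to $0$ even when the $\|v_n\|$ stay above $s$, and the ``separate bookkeeping'' you allude to does not exist in general. The observation you make at the very end --- that vectors whose norms lie in distinct $G$-orbits are automatically orthogonal --- is in fact the \emph{entire} argument, and it renders Gram--Schmidt superfluous. The paper proceeds as follows: write $X=\bigcup_{m\in\Z} g_0^{\,m}B$; since distinct elements of a single layer $g_0^{\,m}B$ lie in distinct $G$-orbits ($B$ being a base), any two $v_n$ with norms in the same layer are already orthogonal. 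Hence if one layer contained infinitely many $\|v_n\|$, those $v_n$ would form an orthogonal sequence with strictly decreasing norms bounded below by any point of $g_0^{\,m-1}B$, contradicting $(\beta)$. So every layer meets $\{\|v_n\|\}$ finitely often; but a lower bound $s$ and the value $\|v_1\|$ trap all $\|v_n\|$ in finitely many consecutive layers, forcing the sequence to be finite --- the desired contradiction. No approximation step is needed at all.

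Two further repairs. Your $(\iota)\Rightarrow(\alpha)$ invokes best approximation from a closed subspace $D$ using spherical completeness of $E$; but the nest-of-balls argument produces a point of $D$ only when $D$ itself is spherically complete, which does not follow from spherical completeness of $E$. The paper uses the orthogonal-base half of $(\iota)$ here: any maximal orthogonal system has closed span isometrically isomorphic to $E$ (via \cite{morado}), hence spherically complete, and then best approximation applies to that span. Finally, $(\epsilon)\Rightarrow(\beta)$ is not parallel to $(\beta)\Rightarrow(\gamma)$; to turn a bad orthogonal sequence into a non-orthocomplemented hyperplane of $E$ (rather than merely of $\overline{[e_n]}$) you need a Hahn--Banach extension of the coefficient-sum functional, which is exactly how the paper runs $(\epsilon)\Rightarrow(\alpha)$.
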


\begin{proof}

($\gamma$)$\Rightarrow$($\alpha$):  It suffices to prove (\ref{NHS3.1}) that each maximal orthogonal system $\{e_i:i\in I\}$ is an orthogonal base. So let $D:=\overline{[e_i: i\in I]}$; we must prove $D=E$. Suppose $v\in E\setminus D$, we derive a contradiction. Consider $V:=\{\|v-d\|:d\in D\}$. If $\|v-d_0\|=\min V$ then $v-d_0\perp D$, a contradiction. So $\min V$ does not exist and we can therefore find $d_1,\, d_2,\,\ldots\in D$ such that $\|v-d_1\|>\|v-d_2\|>\cdots$. By assumption $\|v-d_n\|\rightarrow 0$. But then $v\in\overline{D}=D$ and we have our contradiction.

($\alpha$)$\Rightarrow$($\beta$): Suppose $e_1,\, e_2,\, \ldots\in E$ orthogonal, $\|e_1\|>\|e_2\|>\cdots >s$ for some $s\in X$. We derive a contradiction. Consider $D:=\overline{[e_i: i\in I]}$ and $\phi\in D'$ given by $\phi\left(\sum \xi_ie_i\right)=\sum \xi_i $, ($\xi_i\rightarrow 0$).\\
Now $D$ is a NHS, so $\ker\phi$ has an orthocomplement $Ka$ in $D$. Without lost of generality $\phi(a)=1$. Let $a:=\sum\limits_{n_1}^\infty\lambda_ne_n$. We have $$1=|\phi(a)|=\left|\sum\limits_{n=1}^\infty\lambda_n\right|\leq\max\limits_n|\lambda_n|$$ We see that there exists $i\in\N$ with $|\lambda_i|\geq 1$, $\|a\|=\max\|\lambda_ne_n\|\geq \|\lambda_ie_i\|\geq \|e_i\|>\|e_{i+1}\|$.\\
But $\phi(a-e_{i+1})=0$, so $a-e_{i+1}\in \ker\phi$, so $a\perp a-e_{i+1}$, and so $|e_{i+1}|=\max\{ \|a\|,\,\|a-e_{i+1}\|\}\geq a$. Contradiction.

To complete the link we shall prove, by contradiction, ($\beta$)$\Rightarrow$($\gamma$): Suppose we have a decreasing sequence  $\|v_1\|> \|v_2\|>\cdots>s$ ($v_1,\, v_2,\, \ldots \in E,\, s\in X$). \\
Let $B$ be a convex base of $X$, thus $X=\bigcup g_0^nB$. For any fixed $m\in\Z$ we observe that $I_m:=\{ n\in\Z: \|v_n\|\in g_0^mB\}$ is finite (If $I_m$ is infinite then $\{\|v_n\|:n\in I_m\}$ would consist of elements that are not equivalent mod $G$.Thus they would be orthogonal, but this is forbidden by assumption). But now, $s\in g_0^mB$ for some $m$. We see that $\|v_i\|>g_0^{m-1}B$ for all $i$. Also $v_1\in g_0^rB$ for some $r$. Hence the whole sequence $v_1,\, v_2,\, \ldots$ is contained in $g_0^rB,\, g_0^{r+1}B,\, \ldots,\, g_0^{m-1}B$. This implies finiteness of $v_1,\, v_2,\, \ldots$, a contradiction.

This completes the proof of the equivalence.

Clearly we have ($\alpha$)$\Rightarrow$($\delta$). Now ($\delta$)$\Rightarrow$($\gamma$) is easy by observing that $\overline{[v_1,\, v_2,\,\ldots]}$ is of countable type. ($\alpha$)$\Rightarrow$($\epsilon$) is trivial. 

To prove ($\epsilon$)$\Rightarrow$($\alpha$), let $(e_i)_{i\in I}$ be a maximal orthogonal system. It suffices to prove (\ref{NHS3.1}) that $D:= \overline{[e_i: i\in I]}=E$.  Suppose not. Then take an $a\in E\setminus D$ and consider the map $\lambda a+d\mapsto \lambda$, ($\lambda\in K, \, d\in D$) which is in $(Ka+D)'$. By the Hahn- Banach Theorem (\cite{morado}), $f$ extends to a $g\in E'$. Then $H:= {\rm Ker}\, g$ is a closed hyperplane as $a\in H$. By assumption there is a $z\in E\setminus H$ with $z\perp H$. But then, since $D\subseteq H$, also $z\perp D$ which conflicts with the maximality of $\{e_i: i\in I\}$.

We now prove  ($\alpha$)$\Rightarrow$($\iota$).  Clearly $E$ has an orthogonal base. To prove spherical completeness  let $\{B_i\}_{i\in I}$ be a nest of balls in $E$ where $I$ is linearly ordered and $i<j \Rightarrow B_i\supseteq B_j$. By the remark following Definition 1.2.1 in \cite{morado}, we may suppose that $B_i$ has the form $\{v\in E:\|v-a_i\|\leq r_i\}$, where we may suppose that $r_i<r_j$ if $i>j$.

To prove $\bigcap B_i\neq \varnothing$ we may suppose that $I$ has no smallest element. Then there are $i_1,\, i_2,\,\ldots\in I$ which $r_{i_1}> r_{i_2}>\cdots$. \\
By ($\gamma$) we must have $r_{i_n}\rightarrow 0$ showing that $\bigcap B_i=\bigcap\limits_{n\in\N}B_{i_n}$ is a singleton set by ordinary completeness of $E$.

Finally, we prove ($\iota$)$\Rightarrow$($\alpha$): Let $\{f_i\}_{i\in I}$ be a maximal orthogonal set in $E$. We prove that $D:=\overline{[f_i: i\in I]}=E$. \\
Now $E$ has an orthogonal base (which has the cardinality of $I$), say $\{e_i\}_{i\in I}$.\\
As $\{e_i\}_{i\in I}$ is also a maximal orthogonal system we have that $D$ is isometrically isomorphic to $\overline{[e_i: i\in I]}=E$. Thus $D$ is spherically complete, so for each $v\in E\setminus D$, $\min\{\|v-d\|: d\in D\}$ exists, so that $v-d\perp D$. This conflicts the maximality, so we have $D=E$.

This completes the proof of Theorem \ref{Ban2.3}.
\end{proof}

\begin{corollary}\label{Ban2.4} For each index set $I$ the space $c_0(I)$ is a NHS.\end{corollary}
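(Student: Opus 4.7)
The plan is to invoke the equivalence $(\alpha) \Leftrightarrow (\gamma)$ from Theorem \ref{Ban2.3}, since $(\gamma)$ is trivially easy to check for $c_0(I)$. First I would verify that Theorem \ref{Ban2.3} is applicable: the space $c_0(I)$ is $G$-normed by definition, where $G = \langle g_0 \rangle$ is the (cyclic) value group of $K$, so the $G$-module under consideration is simply $X := G$ itself. By Theorem \ref{Ban1.1}, the cyclic group $G$ (regarded as a $G$-module via multiplication) has a convex base, so the hypotheses of Theorem \ref{Ban2.3} are met. Completeness of $c_0(I)$ is built into its definition.

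Next I would verify condition $(\gamma)$: let $v_1, v_2, \ldots \in c_0(I)$ satisfy $\|v_1\| > \|v_2\| > \cdots$. Since every norm lies in $G$, we may write $\|v_n\| = g_0^{k_n}$ with $k_n \in \Z$. Without loss of generality $g_0 < 1$ (otherwise replace $g_0$ by $g_0^{-1}$ as generator), and strict decrease of the norms becomes strict increase of the exponents $k_1 < k_2 < \cdots$. Therefore $\|v_n\| = g_0^{k_n} \to 0$ in $\R^+$, which is exactly what $(\gamma)$ requires. Hence $c_0(I)$ is a NHS.

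There is no real obstacle here: the corollary is genuinely a one-line consequence of the equivalence, once one notices that the value set $X = G$ is as simple as possible. The only subtlety worth stating explicitly is that a cyclic subgroup of $(\R^+, \cdot)$ is order-isomorphic to $\Z$, so any strictly monotone sequence in it is coinitial (or cofinal) with $G$ itself, and in particular any strictly decreasing sequence converges to $0$.
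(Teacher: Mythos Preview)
Your proof is correct and follows essentially the same route as the paper's: both observe that $\|c_0(I)\setminus\{0\}\|=G$ and verify condition $(\gamma)$ of Theorem~\ref{Ban2.3} by noting that any strictly decreasing sequence in the cyclic group $G$ tends to~$0$. The only difference is that you spell out the verification of the hypotheses and the exponent argument in more detail, whereas the paper dispatches it in a single sentence.
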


\begin{proof} We have $\|c_0(I)\|=G\,\cup\, \{0\}$. Clearly every strictly decreasing sequence on $G$ tends to 0, so $c_0(I)$ satisfies ($\gamma$) of \ref{Ban2.3}, and we are done.
\end{proof}

We come back now to $c_0(I)$ and its link with isometries.

\begin{remark} As it has been said before, we find in \cite{morado} Lemma 4.3.4, that a NHS over a field with an infinite rank valuation does not contain $c_0$.\end{remark}

\begin{definition} $E$ is {\bf rigid} if every linear isometry $ T: E\to E$ is surjective.\end{definition}
The subject was studied in \cite{linear} for the case of NHS over fields with a valuation of (countable) infinite rank. Let $E$ be such a space, then $E$ is rigid, in fact no proper subspace can be isometrically isomorphic to the whole space and any isometry of a closed subspace into itself can be extended to an isometry of $E$ onto itself. Clearly this sharply contrast the case of the classical space $c_0$.

The following theorems show that the condition ``$E$ does not contain $c_0$'' will need additional hypotheses in order to ensure that $E$ is rigid.

\begin{theorem}\label{Ban2.7} If $E$ is a spherically complete space that does not contain $c_0$ then $E$ is rigid.\end{theorem}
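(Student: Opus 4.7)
The strategy is to argue by contradiction: suppose $T:E\to E$ is an isometry with $D:=T(E)\subsetneq E$, and produce from this an orthogonal sequence in $E$ of constant nonzero norm, contradicting the hypothesis that $E$ does not contain $c_0$.

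Pick $v\in E\setminus D$. The first step is to show that $\min\{\|v-d\|:d\in D\}$ is attained and yields an element orthogonal to $D$. Since $E$ is spherically complete and $T$ is an isometry, $D$ is closed in $E$ and itself spherically complete. Consider the family of balls $B_d:=\{w\in D:\|w-d\|\le \|v-d\|\}$ indexed by $d\in D$. The ultrametric inequality shows that whenever $\|v-d_2\|\le \|v-d_1\|$ one has $B_{d_2}\subseteq B_{d_1}$, so the family is linearly nested. By spherical completeness of $D$ the intersection $\bigcap_{d\in D}B_d$ contains some $d_0$, and from $\|d_0-d\|\le \|v-d\|$ for every $d$ one immediately obtains $\|v-d_0\|\le \|v-d\|$, so $d_0$ minimizes the distance. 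A routine non-archimedean calculation then gives $u:=v-d_0\perp D$.

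The second step iterates $T$. Applying the isometry $T^k$ to the relation $\|u+d\|=\max(\|u\|,\|d\|)$, valid for all $d\in D=T(E)$, yields $T^k(u)\perp T^{k+1}(E)$. In particular, for every pair $j>k\ge 0$ one has $T^j(u)\in T^{k+1}(E)$, hence $T^k(u)\perp T^j(u)$. Thus $\{u,T(u),T^2(u),\ldots\}$ is an orthogonal sequence all of whose elements share the common norm $\|u\|\neq 0$, showing that $E$ contains $c_0$ in the sense of the definition preceding Theorem \ref{Ban2.14}. This contradicts the standing hypothesis on $E$.

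The main obstacle is the distance-attainment step: one must verify that $\{B_d\}_{d\in D}$ really is a linearly ordered nest of balls in $D$, and that spherical completeness of $D$ (not merely ordinary completeness) is exactly the right property to invoke. Once the minimizer is in hand and is confirmed to be orthogonal to $D$, iterating $T$ to build the $c_0$-copy is essentially formal, since isometries automatically transport orthogonality relations and preserve norms.
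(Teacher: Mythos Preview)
Your argument is correct and follows essentially the same route as the paper's proof: both assume a non-surjective isometry $T$, use spherical completeness of $TE$ to find a nonzero $u\perp TE$, and then iterate $T$ to produce an orthogonal sequence of constant norm. The only difference is cosmetic: the paper invokes directly that a spherically complete subspace is orthocomplemented, whereas you spell out the nearest-point construction via the nested family $\{B_d\}_{d\in D}$; the underlying idea is the same.
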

\begin{proof} Let $T:E\to E$ be a linear isometry, $TE\neq E$; we derive a contradiction. Now $TE$ is spherically complete, so it has an orthocomplement in $E$, in particular, there exists a nonzero $v\in E$, $v\perp TE$. Inductively we find easily that $\{v,\, Tv,\, T^2v,\ldots \} $ is orthogonal and that $\|T^nv\|=\|v\|$ for each $n$, so $E$ contains $c_0$, a contradiction.

\end{proof}
We obtain
\begin{corollary}\label{Ban2.8} $E$ is a NHS and does not contain $c_0$  if and only if  $E$ is rigid  and has an orthogonal base.\end{corollary}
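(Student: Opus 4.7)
Forward direction: if $E$ is a NHS then Theorem \ref{Ban2.3} $(\alpha)\Rightarrow(\iota)$ gives it an orthogonal base and spherical completeness, and Theorem \ref{Ban2.7} applied to a space that is spherically complete and does not contain $c_0$ delivers rigidity. This is a one-line argument, so the real work lies in the converse.

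For the converse I assume $E$ rigid with orthogonal base $\{e_i\}_{i\in I}$, and prove ``does not contain $c_0$'' and ``is NHS'' by contrapositive---in each case exhibiting a non-surjective linear isometry $T:E\to E$. If $E$ contained $c_0$, then $\{e_i\}$ being a maximal orthogonal system, Theorem \ref{Ban2.14} $(\alpha)\Leftrightarrow(\gamma)$ yields a subsequence $e_{i_1},e_{i_2},\ldots$ with $\|e_{i_n}\|$ in a common orbit $Gb$. Picking $c_n\in K$ with $|c_n|\,\|e_{i_{n+1}}\|=\|e_{i_n}\|$ and defining $T(e_{i_n}):=c_n\,e_{i_{n+1}}$, $T(e_j):=e_j$ for $j\notin\{i_n\}$, each $T(e_i)$ is an orthogonal scalar multiple of a base element with the same norm, so $T$ extends to a linear isometry of $E$; every $T(v)$ has vanishing $e_{i_1}$-coordinate, so $e_{i_1}\notin T(E)$.

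To obtain NHS, by Theorem \ref{Ban2.3} $(\beta)\Rightarrow(\alpha)$ it suffices to rule out an orthogonal sequence $f_1,f_2,\ldots$ with $\|f_1\|>\|f_2\|>\cdots>s$ for some $s\in X$. Given such a sequence, the preceding step forces no $c_0$, so by Theorem \ref{Ban2.14} the $\|f_n\|$'s lie in infinitely many distinct orbits. Using the convex base $B=[a,g_0a)$ of Theorem \ref{Ban1.1}, partition $X=\bigsqcup_{k\in\Z}g_0^kB$; the $B$-level of $\|f_n\|$ is a non-increasing integer sequence bounded below, hence eventually constant at some $l^*$, so after truncation $\|f_n\|=g_0^{l^*}b_n$ with $b_n\in B$ strictly decreasing. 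Expanding $f_n=\sum_i c_{n,i}e_i$ and choosing an argmax index $i^*(n)$, the norm $\|e_{i^*(n)}\|$ belongs to $Gb_n$, so the indices $i^*(n)$ are distinct. I then define $T(e_{i^*(n)}):=e_{i^*(n)}+\varepsilon_n\,e_{i^*(n+1)}$, $T(e_j):=e_j$ otherwise, with $|\varepsilon_n|\in G$ chosen maximal subject to $|\varepsilon_n|\,\|e_{i^*(n+1)}\|\leq\|e_{i^*(n)}\|$. A short computation via the ultrametric inequality shows $T$ is an isometry (with equality of $\|T(v)\|$ and $\|v\|$ realized at the smallest argmax among the $\{i^*(n)\}$-coordinates), and formally inverting $T$ on $e_{i^*(1)}$ produces coefficients $w_n$ whose norms $|w_n|\,\|e_{i^*(n)}\|$ remain at the fixed $B$-level of $\|e_{i^*(1)}\|$ and hence fail to tend to $0$ in $X$; thus $e_{i^*(1)}\notin T(E)$.

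The main obstacle is this last construction. The delicate point is that while the orbit representatives $b_n$ strictly decrease in $B$, the norms $\|e_{i^*(n)}\|$ themselves need not be monotone in $X$, since their levels $m_n$ can fluctuate. The key observation is that the maximality of each $|\varepsilon_n|$ arranges the telescoping product $|\varepsilon_1\cdots\varepsilon_{n-1}|\,\|e_{i^*(n)}\|$ to land at the fixed level $m_1$ of $\|e_{i^*(1)}\|$, where a decrease in $b_n$ alone is insufficient to drive the quantity to $0$ in $X$.
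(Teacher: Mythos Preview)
Your argument is correct. The forward direction is exactly the one-liner via Theorem~\ref{Ban2.3}$(\alpha)\Rightarrow(\iota)$ and Theorem~\ref{Ban2.7}, and your two converse constructions (the scaled shift along a same-orbit subfamily of the base for ``no $c_0$'', and the perturbed map $e_{i^*(n)}\mapsto e_{i^*(n)}+\varepsilon_n e_{i^*(n+1)}$ for ``NHS'') both check out; in particular your telescoping computation $|\varepsilon_1\cdots\varepsilon_{n-1}|\,\|e_{i^*(n)}\|=g_0^{m_1}b_n$ is right, and this indeed stays in the fixed stratum $g_0^{m_1}B$, so the formal preimage of $e_{i^*(1)}$ does not converge.

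The paper, however, gives no proof at all---it writes only ``We obtain''---and the route it has in mind is much shorter than yours. For ``no $c_0$'' one simply invokes Theorem~\ref{Ban2.9} (stated immediately afterward and independent of the corollary). For ``NHS'' one argues exactly as in the proof of Theorem~\ref{Ban2.3}$(\iota)\Rightarrow(\alpha)$: if $E$ is not NHS then by Remark~\ref{NHS3.1} there is a maximal orthogonal system $\{f_j\}_{j\in J}$ with $D:=\overline{[f_j:j\in J]}\subsetneq E$; by \cite{morado}~2.4.12 the orbit-multiplicities of $\{f_j\}$ and of the given base $\{e_i\}$ agree, so one can match them by a bijection and rescale to obtain a linear isometry $T:E\to D\subsetneq E$, contradicting rigidity. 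This bypasses your level analysis and the delicate $\varepsilon_n$ construction entirely. What your approach buys is self-containment: you never appeal to the orbit-multiplicity theorem from \cite{morado}, and you exhibit the non-surjective isometry explicitly on base vectors rather than through an abstract isomorphism $E\cong D$.
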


Not surprisingly, rigidity is a condition that forbids $E$ to contain $c_0$.

\begin{theorem}\label{Ban2.9} A rigid space does not contain $c_0$.\end{theorem}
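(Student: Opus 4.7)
Plan:

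I would proceed by contraposition. Suppose $E$ contains $c_0$, so that there exist $a\in X$ and orthogonal $e_1,e_2,\ldots \in E$ with $\|e_n\|=a$ for every $n$; the goal is then to construct a non-surjective linear isometry $T:E\to E$, contradicting rigidity. Set $F:=\overline{[e_n:n\ge 1]}$. On $F$ the natural candidate is the shift $T_0(\sum_n \lambda_n e_n):=\sum_n \lambda_n e_{n+1}$, extended by continuity; it is a linear isometry of $F$ whose image $\overline{[e_n:n\ge 2]}$ does not contain $e_1$, so $F$ in isolation already witnesses its own non-rigidity. The whole question is whether this shift can be promoted to a self-isometry of $E$ whose image still omits $e_1$.

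My first move would be to enlarge $\{e_n\}$ by Zorn's Lemma to a maximal orthogonal system $\{e_n\}_{n\ge 1}\cup\{f_j\}_{j\in J}$ in $E$; by maximality every $f_j$ then lies in $F^{\perp}$. Let $D$ be the closed linear span of this system. Every $d\in D$ admits a unique convergent orthogonal expansion $d=\sum_n\lambda_n e_n+\sum_j\mu_j f_j$ with $\|d\|=\max\{\sup_n|\lambda_n|a,\sup_j|\mu_j|\|f_j\|\}$, so the prescription $T(e_n)=e_{n+1}$, $T(f_j)=f_j$ extends continuously to a linear isometry $T:D\to D$ whose image is the closed span of $\{e_n:n\ge 2\}\cup\{f_j\}$, which still avoids $e_1$. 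In the happy case $D=E$ (for instance whenever the maximal orthogonal system happens to be an orthogonal base), this $T$ is already the non-surjective isometry we want, and rigidity is contradicted.

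The real obstacle is the case $D\subsetneq E$, since a maximal orthogonal system need not span the whole space. To overcome it, my plan is to use the Hahn--Banach theorem (in the form invoked in the proof of Theorem \ref{Ban2.3}) to extend each coordinate functional $\phi_n(\sum_k\lambda_k e_k):=\lambda_n$ from $F$ to a functional $\tilde\phi_n:E\to K$ with $|\tilde\phi_n(v)|\cdot a\le\|v\|$, and to set $\tilde T(v):=v+\sum_n\tilde\phi_n(v)(e_{n+1}-e_n)$ on all of $E$. A direct computation shows that on $F$ this recovers $T_0$, and that $\tilde\phi_1(\tilde T(v))=0$ for every $v$, so $e_1\notin\tilde T(E)$ and non-surjectivity is automatic. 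The two delicate points I expect to form the technical heart of the proof are (i) ensuring pointwise convergence of the defining series on each $v\in E\setminus D$, which requires $\tilde\phi_n(v)\to 0$ and does not follow for free from Hahn--Banach, and (ii) showing that $\tilde T$ is genuinely norm-preserving rather than only norm-decreasing. Settling both, most likely by exploiting the convex-base structure of $X$ and the strict ultrametric cancellations it affords, is where the real work of the proof will lie.
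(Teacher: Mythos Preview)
Your plan has a genuine gap: both routes you sketch hit obstacles that you yourself flag but do not resolve, and in fact the second one (the Hahn--Banach construction of $\tilde T$) cannot be completed as written, since there is no reason the extended functionals $\tilde\phi_n$ should satisfy $\tilde\phi_n(v)\to 0$ for arbitrary $v\in E$, nor that the resulting map be norm-preserving off $F$.

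The idea you are missing is that $F:=\overline{[e_n:n\ge1]}$ is itself spherically complete, and therefore already has an orthocomplement $D$ in $E$ --- no Zorn, no Hahn--Banach needed. The reason is that $\|F\setminus\{0\}\|$ is the single orbit $Ga$, so any strictly decreasing sequence of norms in $F$ is of the form $g_0^{n_1}a>g_0^{n_2}a>\cdots$ with $n_k\to-\infty$, hence tends to $0$; by condition~($\gamma$) of Theorem~\ref{Ban2.3} (equivalently, by Corollary~\ref{Ban2.4}, since $F$ is isometric to $c_0$) the space $F$ is a NHS, and by ($\alpha$)$\Rightarrow$($\iota$) of the same theorem it is spherically complete. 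A spherically complete closed subspace is always orthocomplemented. Now define $T$ to be the shift $e_n\mapsto e_{n+1}$ on $F$ and the identity on $D$: this is a linear isometry of $E$ whose range misses $e_1$, contradicting rigidity. This is exactly the paper's argument, and it bypasses both of your ``delicate points'' entirely.
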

\begin{proof} Suppose $E$ is rigid and contains $c_0$, that is there are $a_1,\, a_2,\, \ldots$, orthogonal, $\|a_n\|=s$ for all $n$. Then $\overline{[a_1,\, a_2,\, \ldots]}$ is a NHS, it is spherically complete so it has an orthocomplement $D$.\\
Then define $T$ in  $\overline{[a_1,\, a_2,\, \ldots]}$ by $Ta_n=a_{n+1}$ and $T(d)=d$ for all $d\in D$. This gives us a nonsurjective linear isometry, a contradiction.
\end{proof}

\begin{theorem}\label{Ban2.10} Let $E$ have an orthogonal base $\{e_1,\, e_2,\, \ldots\}$ for which $n\neq m$ implies $\|e_n\|\notin G\|e_m\|$. Then $E$ does not contain $c_0$.\end{theorem}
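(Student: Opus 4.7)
The plan is to reduce the statement to Theorem \ref{Ban2.14}, using the characterization of ``not containing $c_0$'' in terms of the distribution of norms over the orbits of a convex base.

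First I would fix a convex base $B$ of $X$; such a base exists by Theorem \ref{Ban1.1}, since $G$ is cyclic. The key observation is that the hypothesis $\|e_n\|\notin G\|e_m\|$ for $n\neq m$ says precisely that the norms $\|e_1\|,\|e_2\|,\ldots$ lie in pairwise distinct $G$-orbits. Since the orbits of $G$ on $X$ are exactly the sets $Gb$ for $b\in B$, and these partition $X$ by the definition of a base, it follows that for each $b\in B$ the set
\[
\{n\in\N : \|e_n\|\in Gb\}
\]
contains \emph{at most one} element, and in particular is finite.

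Next I would verify that $\{e_1,e_2,\ldots\}$ is a \emph{maximal} orthogonal system. Indeed, since it is an orthogonal base, its closed linear span equals $E$; so any vector orthogonal to every $e_n$ must in particular be orthogonal to itself, hence zero, and the system cannot be properly extended. This is the only slightly non-routine verification, but it follows directly from the definition of an orthogonal base in this non-archimedean setting.

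Having produced a maximal orthogonal system $\{e_i:i\in I\}$ (with $I=\N$) satisfying the finiteness condition on orbits, we are in case $(\gamma)$ of Theorem \ref{Ban2.14}. That theorem then gives $(\alpha)$, namely that $E$ does not contain $c_0$, which is exactly the conclusion.

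I do not anticipate any serious obstacle: the substantive content is already packaged in Theorem \ref{Ban2.14}, and the hypothesis on the base is tailored so that the orbit-finiteness condition holds for the trivial reason that each orbit contains at most one base vector.
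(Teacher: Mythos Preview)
Your argument is correct and is a genuinely different route from the paper's. You reduce the claim to the already established equivalence in Theorem~\ref{Ban2.14}: the hypothesis forces each $G$-orbit to meet $\{\|e_n\|:n\in\N\}$ in at most one point, and since an orthogonal base is automatically a maximal orthogonal system, condition~$(\gamma)$ of Theorem~\ref{Ban2.14} is satisfied and~$(\alpha)$ follows. The paper instead argues directly: assuming an orthogonal sequence $a_1,a_2,\ldots$ with common norm $s$, it expands each $a_n$ along the base, observes that the unique index $j_0$ with $s\in G\|e_{j_0}\|$ must carry the dominant term for every $n$, and then invokes the Perturbation Lemma to conclude that the one-dimensional sequence $(\lambda_{j_0}^n e_{j_0})_n$ is orthogonal, an absurdity. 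Your approach is cleaner and makes transparent that Theorem~\ref{Ban2.10} is essentially a special case of Theorem~\ref{Ban2.14}; the paper's approach is more self-contained (it does not rely on the comparison of maximal orthogonal systems from \cite{morado} hidden inside the proof of $(\gamma)\Rightarrow(\alpha)$ in Theorem~\ref{Ban2.14}) and exhibits concretely where the obstruction lies. One small remark: Theorem~\ref{Ban2.14} is stated for $X=\|E\setminus\{0\}\|$, so strictly speaking you should take $B$ to be a convex base of that submodule rather than of the ambient $G$-module; this changes nothing in your argument.
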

\begin{proof} Suppose we have an orthogonal set $\{a_1,\, a_2,\,\ldots\}$ and $s\in X$ such that $\|a_n\|=s$ for all $n$; we derive a contradiction. For each $n\in\N$ we have an expansion $$a_n=\sum_{i=1}^\infty \lambda_j^ne_j\hspace{1cm} (\lambda_j^n\in K)$$
There is a unique $j_0$ with $s\in G\|e_{j_0}\|$. Thus $$s=\|a_n\|=\max\limits_j\|\lambda_j^ne_j\|=\|\lambda_{j_0}^ne_{j_0}\|$$ Clearly we have for each $n$, $\|a_n-\lambda_{j_0}^ne_{j_0}\|<\|a_n\|$. Therefore, by the Perturbation Lemma, the sequence $n\mapsto \lambda_{j_0}^ne_{j_0}$ must be orthogonal, an absurdity.\end{proof}

\begin{theorem}\label{Ban2.16} Let $E$ have an orthogonal base and suppose there exists a sequence  $\{v_n\}$ in $E$ with $\|v_1\|>\|v_2\|>\cdots$, $\|v_n\|\nrightarrow 0$. Then $E$ is not rigid.
\end{theorem}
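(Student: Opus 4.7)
The plan is to deduce the non-rigidity of $E$ by contrapositive from Corollary \ref{Ban2.8}, using Theorem \ref{Ban2.3} first to identify $E$ as failing to be a NHS.

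First I would observe that the hypothesized sequence $\{v_n\}$ with $\|v_1\|>\|v_2\|>\cdots$ and $\|v_n\|\nrightarrow 0$ directly falsifies condition $(\gamma)$ of Theorem \ref{Ban2.3}. By the equivalence $(\alpha)\Leftrightarrow(\gamma)$ established there, it follows immediately that $E$ is not a Norm Hilbert space.

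Next, Corollary \ref{Ban2.8} asserts the biconditional: ``$E$ is a NHS and does not contain $c_0$'' if and only if ``$E$ is rigid and has an orthogonal base.'' Contrapositively, if the right-hand side fails while $E$ still has an orthogonal base, then the only way the right-hand side can fail is by loss of rigidity; and if $E$ is not a NHS then the left-hand side certainly fails. Applying this to our $E$—which has an orthogonal base by hypothesis and is not a NHS by the previous paragraph—yields that $E$ is not rigid.

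The argument is essentially a one-line packaging of Theorem \ref{Ban2.3} and Corollary \ref{Ban2.8}, so there is no substantive obstacle. I note in passing that a direct construction of a non-surjective isometry from $\{v_n\}$ would be awkward: extracting (by the pigeonhole argument in the strips $g_0^mB$ that appears in the proof of Theorem \ref{Ban2.3} $(\beta)\Rightarrow(\gamma)$) a subsequence whose norms lie in pairwise distinct $G$-orbits gives automatically orthogonal vectors, but these norms cannot be matched by scalar multiplication in $K$, so no naive shift defines an isometry. Routing the argument through Corollary \ref{Ban2.8} bypasses this obstruction entirely.
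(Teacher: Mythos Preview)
Your proof is correct and follows exactly the same route as the paper: first invoke Theorem \ref{Ban2.3} to conclude that $E$ is not a NHS, then apply Corollary \ref{Ban2.8} (using the orthogonal-base hypothesis) to deduce non-rigidity. The paper's version is simply the two-line compression of your argument.
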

\begin{proof} By  Theorem \ref{Ban2.3} $E$ is not a NHS, so by Corollary \ref{Ban2.8} $E$ cannot be rigid.

\end{proof}

Now it is easy to present an example of a space that does not contain $c_0$ but is not rigid. Let $B=\{ b_1,\, b_2,\,\ldots\}\subseteq\R^+$ a denumerable chain where $b_1>b_2>\cdots$. Let $X:= B\times G$ ($G$ a cyclic group) thus $X$ has a convex base $B\times\{1_G\}$. Let $E$ have an orthogonal base $\{ e_1,\, e_2,\, \ldots\} $ with $\|e_n\|=(b_n, \, 1_G)$ for each $n$. Clearly by \ref{Ban2.10} $E$ does not contain $c_0$. By \ref{Ban2.16} $E$ is not rigid.\\

The next section contains the surprising main result of this paper.

\section{A new characterization of NHS}
We recall the standard definition: A linear ordering $\leq$ of a set $S$ is a well-ordering if every non-empty subset of $S$ has a least element.
\begin{theorem}\label{Ban2.12} Let $K$ be a valued field and $G=\langle g_0\rangle$ its cyclic value group. $E$ is a $K$-Banach space and $X:=\|E\setminus\{0\}\|$, the set of norms, is a $G$-module with convex base $B$.\\
Then $E$ is a NHS if and only if $B$ is well ordered.\end{theorem}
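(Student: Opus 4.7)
The plan is to reduce everything to condition $(\gamma)$ of Theorem \ref{Ban2.3}: $E$ is a NHS iff every strictly decreasing sequence of norms converges to $0$ in $X$. Both implications then become statements about the combinatorial structure of $X$ rather than about the space $E$ itself.

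As a preliminary structural remark, I would observe that $X$ decomposes as a stratified union $X=\bigsqcup_{n\in\Z}g_0^{n}B$ with the property that $n<m$ implies every element of $g_0^{n}B$ is strictly less than every element of $g_0^{m}B$. The reason: for $b\in B$ and $n\neq 0$, the element $g_0^{n}b$ lies in the orbit $Gb$ and hence cannot lie in $B$ (since $B$ meets each orbit exactly once). Convexity of $B$ then forces $g_0^{n}b$ to lie either entirely below or entirely above $B$, and the sign of $n$ decides which. This gives the stratification, and via $G$-equivariance it extends to the comparison between $g_0^{n}B$ and $g_0^{m}B$.

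For the forward direction, assume $E$ is a NHS and that $B$ is not well-ordered, so there is a strictly decreasing sequence $b_{1}>b_{2}>\cdots$ in $B$. Since $B\subseteq X=\|E\setminus\{0\}\|$, I pick $v_{n}\in E$ with $\|v_{n}\|=b_{n}$, obtaining a strictly decreasing sequence of norms. By $(\gamma)$ this forces $\|v_{n}\|\to 0$ in $X$, meaning that eventually $b_{n}<g_0^{-1}b_{1}$. But $g_0^{-1}b_{1}\in g_0^{-1}B$ lies strictly below every element of $B$ by the stratification, so $b_{n}>g_0^{-1}b_{1}$ for all $n$, a contradiction.

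For the converse, assume $B$ is well-ordered and verify $(\gamma)$. If some strictly decreasing sequence $\|v_{1}\|>\|v_{2}\|>\cdots$ fails to converge to $0$, there exists $s\in X$ with $\|v_{n}\|\ge s$ for all $n$. Writing each $\|v_{n}\|=g_0^{k_{n}}b'_{n}$ with $b'_{n}\in B$ and $s=g_0^{m}b'$, the inequalities $s\le\|v_{n}\|\le\|v_{1}\|=g_0^{k_{1}}b'_{1}$ combined with the stratification force $m\le k_{n}\le k_{1}$. Thus the exponents $k_{n}$ take only finitely many values, so by pigeonhole some value $k$ is attained by infinitely many $n$. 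Along that subsequence the norms $\|v_{n}\|=g_0^{k}b'_{n}$ are still strictly decreasing, and since multiplication by $g_0^{k}$ is an order isomorphism, the corresponding $b'_{n}$ form a strictly decreasing sequence in $B$, contradicting well-ordering.

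The main obstacle is really just the preliminary stratification lemma: once it is in hand, the forward direction is a one-line lift and the backward direction is the pigeonhole on exponents. I expect the only place requiring care is distinguishing "convergence to $0$ in $X$" (where $0$ is the adjoined minimum) from convergence in $\R$; the stratification makes this transparent because $\|v\|\to 0$ in $X$ is equivalent to the exponents $k_{n}$ tending to $-\infty$.
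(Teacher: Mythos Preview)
Your proof is correct and follows essentially the same route as the paper: both directions are reduced to condition~$(\gamma)$ of Theorem~\ref{Ban2.3} via the stratification $X=\bigsqcup_{n\in\Z}g_0^{\,n}B$ with $g_0^{\,n}B<g_0^{\,m}B$ for $n<m$. The only cosmetic difference is in the converse, where the paper argues directly that each stratum $g_0^{\,m}B$ (being well-ordered) can contain only finitely many terms of the decreasing sequence, hence $\|v_n\|\to 0$, while you reach the same conclusion by contradiction and pigeonhole on the exponents~$k_n$.
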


\begin{proof} Let $E$ be a NHS, let $b_1>b_2>\cdots$ where $b_n\in B$; we derive a contradiction. There are $v_1,\, v_2,\,\ldots\in E$ with $\|v_n\|=b_n$ for each $n$. Then $\|v_1\|>\|v_2\|>\cdots$. But by \ref{Ban2.3}, $\|v_n\|\to 0$ an impossibility as for all $n$, $b_n>g_0^{-1}B$ .

Conversely. let $B$ be well-ordered, let $v_1,\, v_2,\, \ldots \in E$. $\|v_1\|>\|v_2\|>\cdots$. We have $X=\bigcup\limits_{n\in\Z}g_0^nB$, where $$\cdots< g_0^{-1}B<B<g_0B<g_0^2B<\cdots$$ Since $\{n\in \N:\|v_n\|\in g_0^mB\}$ is finite for each $m\in \Z$ (as $g_0^mB$ is well-ordered) we must have, for each $m\in\Z$, that $\|v_n\|<g_0^mB$ eventually, i.e. $\|v_n\|\to 0$ so $E$ is a NHS by Theorem \ref{Ban2.3}.\end{proof}

\begin{remark} The previous theorem is a strong characterization of NHS over $G$-modules $X$ with a convex base and $G$ a cyclic group. If we come back to examples \ref{ex1} and \ref{ex2} it is now clear that an $X_1$-Banach space can never be a NHS, while an $X_2$-Banach space is necessarily a NHS.\\

In addition, as the value group $G=\langle g_0\rangle$ is a cyclic group, we have by Theorem 2.1 that  for any $a \in X$ the interval $[a,g_0a)$ is a convex base of $X$. With this in mind we show in the next Corollary how easy it is to decide if the space $E$ is a NHS.

 \end{remark}

\begin{corollary} Let $K$, $G$, $X$, $E$ be as in Theorem \ref{Ban2.12}. Let $a\in X$, then $E$ is a NHS if and only if the interval $[a,\, g_0a)$ is well ordered.\end{corollary}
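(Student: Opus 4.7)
The plan is to observe that the corollary follows almost directly by combining Theorem \ref{Ban1.1} with Theorem \ref{Ban2.12}. First, I would invoke Theorem \ref{Ban1.1}, which states that because $G=\langle g_0\rangle$ is cyclic, the interval $[a,g_0a)$ is itself a convex base of $X$ for any $a\in X$. This removes the only conceptual hurdle, namely that the specific interval we care about genuinely qualifies as a convex base and is not just a generating set.

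Once this is established, the second step is simply to apply Theorem \ref{Ban2.12} with the convex base $B := [a,g_0a)$. Since that theorem asserts the equivalence ``$E$ is a NHS iff $B$ is well-ordered'' for any fixed convex base $B$ of $X$, the conclusion follows immediately: $E$ is NHS if and only if $[a,g_0a)$ is well-ordered.

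The proof is therefore a one-line bookkeeping argument rather than a substantive calculation. The only point where one might pause is to confirm that Theorem \ref{Ban2.12} really applies uniformly to any convex base, not only to some preferred one; but this is exactly how that theorem is stated (the base $B$ is arbitrary in the hypothesis). Consequently, there is no genuine obstacle here, and the ``surprise'' of the corollary lies not in its proof but in the practical content: the well-ordering of a single, easily described interval of $X$ suffices to decide whether $E$ is a Norm Hilbert space.
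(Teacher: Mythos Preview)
Your proposal is correct and matches the paper's own reasoning: the paper does not give a separate proof of the corollary but notes in the preceding remark that, by Theorem~\ref{Ban1.1}, the interval $[a,g_0a)$ is a convex base of $X$, so Theorem~\ref{Ban2.12} applies directly with $B=[a,g_0a)$. Your observation that Theorem~\ref{Ban2.12} is stated for an arbitrary convex base (so no ``preferred'' base issue arises) is exactly the point.
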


\section*{References}

\end{document}